\newtheorem{lemma}{Lemma}
\newtheorem{theorem}{Theorem}
\theoremstyle{definition}
\newtheorem{definition}{Definition}
\newtheorem{corollary}{Corollary}
\newcommand{\te}{Teich\-m\"ul\-ler}
\newcommand{\tes}{Teichm\"uller's}
\theoremstyle{definition}
\newcommand{\cir}{\mathbb{ S}^1}
\begin{document}

\title[UAA and UAC endomorphisms]{Asymptotically Affine
and Asymptotically Conformal Circle Endomorphisms}

\author{Frederick P. Gardiner and Yunping Jiang}

\address{Frederick P. Gardiner: Department of Mathematics\\
2900 Bedford Avenue\\
Brooklyn, NY 11210-2889\\
and\\
Department of Mathematics\\
Graduate School of the City University of New York\\
365 Fifth Avenue, New York, NY 10016}
\email{frederick.gardiner@gmail.com}

\address{Yunping Jiang: Department of Mathematics\\
Queens College of the City University of New York\\
Flushing, NY 11367-1597\\
and\\
Department of Mathematics\\
Graduate School of the City University of New York\\
365 Fifth Avenue, New York, NY 10016}
\email{yunping.jiang@qc.cuny.edu}

\subjclass[2000]{Primary 37F15, Secondary 37F30}

\keywords{uniformly asymptotically affine (UAA), uniformly
asymptotically conformal (UAC), quasisymmetric homeomorphism,
symmetric homeomorphism, Beurling-Ahlfors extension}

\thanks{The research is partially supported by
PSC-CUNY awards.}

\begin{abstract}
We show that every uniformly asymptotically affine circle
endomorphism has a uniformly asymptotically conformal extension.
\end{abstract}

\maketitle

\section*{Introduction}
   First we summarize basic properties of uniformly asymptotically affine
   circle degree $d>1$ endomorphisms.  Then we use
   the Beurling-Ahlfors extension to realize any uniformly
   asymptotically affine system as the restriction to the circle of a
   uniformly asymptotically conformal system. Theorem 1 is a
   well-known
   characterization of symmetric homeomorphisms of the real axis in
   terms of possible quasiconformal extensions. Theorem~\ref{Cui}
   is an exposition of calculations given by Cui in~\cite{Cui}.
   Theorem~\ref{qsconjugacy} is a special case of a theorem for
   any one-dimensional Markov maps with bounded geometry in~\cite{Jiang0,Jiang1} (see also~\cite{Jiang}).
   Theorem~\ref{uaatouac} is the main new result. Since it is known that the
    \te\
    space of uniformly asymptotically affine expanding maps is
    complete with \tes\ metric (see, for example~\cite{GardinerSullivan,CuiGardinerJiang,Jiang}),
    this theorem shows that the
    uniformly asymptotically conformal expanding maps also form a
    complete metric space with \tes\ metric.  In a subsequent paper we will exploit this fact to
    construct a dual dynamical system corresponding to every
    UAA circle expanding map.

    We take this opportunity to express our gratitude to the
    referee for several important and helpful corrections.

\section{Circle endomorphisms}
Let $\cir=\{ z\in {\Bbb C}\;  : \; |z|=1\}$ be the unit circle.
The map $\pi: {\Bbb R} \rightarrow \cir$ defined by
$$\pi (x) = e^{2\pi ix}$$  realizes ${\mathbb R}$ as
the universal covering of $\cir$
  with covering $\pi$ and covering group
${\Bbb Z}.$   $\pi$ induces an isomorphism from ${\Bbb R}/{\Bbb
Z}$ onto $\cir.$

Let $m$ be the degree of an orientation preserving covering $f$
from $\cir$ onto itself and assume $1<m<\infty.$ $f$ is an
endomorphism of $\cir$ and it necessarily has one fixed point $p.$
By selecting an orientation preserving M\"obius transformation $A$
that preserves the unit disk with $A(p)=1,$ we may shift
consideration of the map $f$ to the map $\tilde{f}=A \circ f \circ
A^{-1}.$  $\tilde{f}$ has the same dynamical properties as $f$ and
it fixes the point $1.$  Therefore, without loss of generality, we
may assume to begin with that $f$ fixes the point $p=1.$  We
denote the homeomorphic lift of $f$ by $F.$ $F$ is uniquely
determined by $f$ if we assume it has the following properties:

i) $F$ is a homeomorphism of ${\mathbb R},$

ii) $\pi \circ F = f \circ \pi,$

iii) $F(0)=0.$

Note that $F(x+1)=F(x)+m.$  In this paper  we refer either to $f$
or to its unique corresponding lift $F$ as a  circle endomorphism.
We denote the $n$-fold composition of $f$ with itself by  $f^{n}.$
Similarly, $F^{n}$ is the $n$-fold composition of $F.$

Suppose $h$ is an orientation-preserving circle homeomorphism. Let
$H$ be the lift of $h$ to ${\mathbb R}$ such that $0\leq H(0)<1$.
Then $H(x+1)=H(x)+1$ and $h$ and $H$ are one-to-one
correspondences.

\begin{definition}
A circle homeomorphism $h$ is called $M$-quasisymmetric if there
is a constant $M\geq 1$ such that for all real numbers $x$ and for
all $y>0,$
\begin{equation}\label{Mcondition}
 \frac{1}{M} \leq \frac{H(x+y)-H(x)}{H(x)-H(x-y)}\leq M.
\end{equation}
\end{definition}

The expression
$$
\rho_{H}(x,y)= \frac{H(x+y)-H(x)}{H(x)-H(x-y)}, \quad x,\; y\neq 0
\in {\mathbb R},
$$
is called the quasisymmetric distortion function for $H$. We also
need the skew quasisymmetric distortion function for $H,$ which is
defined by
$$
\rho_{H}(x,y, k)=\frac{H(x+ky)-H(x)}{H(x)-H(x-y)}, \quad x,\;
y\neq 0\in {\mathbb R},\;\; 0<k\leq 1.
$$

The following lemma is well-known for a quasisymmetric
homeomorphism of the real line. One can prove it by using
quasiconformal mapping theory (see, for
example,~\cite{LehtoBook}). However, we need to use it for a
quasisymmetric homeomorphism of a compact interval. For
quasisymmetric homeomorphisms of compact intervals, the proof by
using quasiconformal mapping theory does not work since a
$M$-quasisymmetric homeomorphism of a compact interval is not
necessarily a restriction of a $M$-quasisymmetric homeomorphism of
the real line. However, there is a proof for a quasisymmetric
homeomorphism of a compact interval by using elementary real
analysis methods (see, for example,~\cite{Jiang}). For the
convenience of the reader we also give the proof here. First we
define the notion of quasisymmetry for a homeomorphism of a
compact interval. An orientation-preserving homeomorphism $H$ of a
closed interval $[a,b]$ is called $M$-quasisymmetric if
$$
M^{-1}\leq \frac{H(x+t)-H(x)}{H(x)-H(x-t)} \leq M, \quad \forall\;
x, x+t, x-t\in [a,b].
$$

\vspace*{10pt}
\begin{lemma}~\label{sd}
There is a function $\zeta (M)>0$ satisfying $\zeta (M) \to 0$ as
$M\to 1$ such that for any $M$-quasisymmetric homeomorphism $H$ of
$[0,1]$ with $H(0)=0$ and $H(1)=1$,
$$
|H(x)-x|\leq \zeta (M), \quad \forall\; x \in [0,1].
$$
\end{lemma}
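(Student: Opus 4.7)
The plan is to exploit the quasisymmetry condition at midpoints of nested subintervals: it forces $H$ to send midpoints close to midpoints when $M$ is close to $1$. Running this on the dyadic grid $\{k/2^n\}$ controls $H$ on a dense set, and continuity of $H$ extends the bound to the whole interval.

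Quantitatively, applying the definition at $x=(a+b)/2$, $t=(b-a)/2$ to any $[a,b]\subset[0,1]$ shows that if $H([a,b])=[\alpha,\beta]$, then $H$ sends the midpoint of $[a,b]$ into a window of radius $\epsilon_M(\beta-\alpha)$ around the midpoint of $[\alpha,\beta]$, where $\epsilon_M=(M-1)/(2(M+1))$; moreover each half of the image has length at most $M(\beta-\alpha)/(M+1)$. I would run a double induction along the dyadic grid, tracking both
\[
\delta_n=\max_k|H(k/2^n)-k/2^n|\quad\text{and}\quad \ell_n=\max_k\bigl[H((k{+}1)/2^n)-H(k/2^n)\bigr].
\]
The two observations above give $\ell_{n+1}\le (M/(M+1))\,\ell_n$ and, via the triangle inequality at each new midpoint, $\delta_{n+1}\le \delta_n+\epsilon_M\,\ell_n$. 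Starting from $\ell_0=1$ and $\delta_0=0$, summing the geometric series yields $\delta_n\le \epsilon_M(M+1)=(M-1)/2$ for all $n$. Density of dyadic points and continuity of $H$ on the compact interval $[0,1]$ promote this to $|H(x)-x|\le (M-1)/2$ on all of $[0,1]$, so one may take $\zeta(M)=(M-1)/2$, which tends to $0$ as $M\to 1$.

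The step requiring care is the choice of quantity to iterate. A direct induction on $\delta_n$ alone, using the \emph{domain} scales $1/2^n$, produces a recursion whose solution grows like $(1+2\epsilon_M)^n$ and fails to give a uniform bound. The correct scale at which the quasisymmetric midpoint condition produces a perturbation is the \emph{image} length, and these image lengths decay geometrically for every finite $M$; it is this decay that makes $\sum_n\epsilon_M\ell_n$ finite and of order $M-1$, from which $\zeta(M)\to 0$ follows immediately.
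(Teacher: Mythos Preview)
Your argument is correct and follows essentially the same route as the paper: both proofs run an induction along the dyadic grid, use the $M$-quasisymmetry at midpoints to show the image subintervals at level $n$ have length at most $(M/(M+1))^n$, derive a recursion of the form $\delta_{n}\le \delta_{n-1}+(\text{geometric error})$, sum the series, and then pass to all of $[0,1]$ by density and continuity. Your bookkeeping via the pair $(\delta_n,\ell_n)$ is in fact a bit tighter, yielding the cleaner constant $\zeta(M)=(M-1)/2$, whereas the paper's version of the recursion (using $\tau_n=\max\{(M/(M+1))^n-2^{-n},\,2^{-n}-(1/(M+1))^n\}$) produces a more complicated $\zeta(M)$ of the same order.
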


\begin{proof}
Consider points $x_{n}=1/2^{n}$, $n=0, 1, \cdots$. The
$M$-quasisymmetry condition implies that
$$
M^{-1}\leq
\frac{H(\frac{1}{2^{n-1}})-H(\frac{1}{2^{n}})}{H(\frac{1}{2^{n}})-H(0)}\leq
M.
$$
From this and the fact that $H(0)=0$, we get
$$
(1+M^{-1})H(\frac{1}{2^{n}})\leq H(\frac{1}{2^{n-1}}) \leq
(1+M)H(\frac{1}{2^{n}}).
$$
This gives
$$
\frac{1}{1+M} H(\frac{1}{2^{n-1}}) \leq H(\frac{1}{2^{n}}) \leq
\frac{1}{1+M^{-1}} H(\frac{1}{2^{n-1}}).
$$
Using the fact that $H(1)=1$, we further get
$$
\Big(\frac{1}{1+M}\Big)^{n} \leq H(\frac{1}{2^{n}}) \leq
\Big(\frac{1}{1+M^{-1}}\Big)^{n}, \quad \forall\; n\geq 1.
$$
Furthermore, by $M$-quasisymmetry  and induction on $n=1,
2,\cdots$, yield
$$
\Big(\frac{1}{1+M}\Big)^{n} \leq H(\frac{i}{2^{n}}) -
H(\frac{i-1}{2^{n}}) \leq \Big( \frac{1}{1+M^{-1}}\Big)^{n}, \quad
\forall \; n\geq 1, \;\; 1\leq i\leq 2^{n}.
$$

Let
$$
\tau_n=
\max\left\{\left(\frac{M}{M+1}\right)^n-\frac{1}{2^n},\frac{1}{2^n}-\left(\frac{1}{M+1}\right)^n\right\},
\quad n=1,2, \cdots.
$$
Then for $n=1$,
$$
|H(\frac{1}{2}) -\frac{1}{2}| \leq
\tau_{1}=\frac{1}{2}\frac{M-1}{M+1},
$$
and for any $n>1$, we have
$$
\max_{0\leq i\leq 2^{n}} \Big| H(\frac{i}{2^{n}})
-\frac{i}{2^{n}}\Big| \leq \max_{0\leq i\leq 2^{n-1}} \Big|
H(\frac{i}{2^{n-1}}) -\frac{i}{2^{n-1}}\Big| + \tau_{n}
$$
By summing over $k$ for $1 \leq k \leq n,$ we obtain
$$
\max_{0\leq i\leq 2^{n}} \Big| H(\frac{i}{2^{n}})
-\frac{i}{2^{n}}\Big| \leq \delta_{n}=\sum_{k=1}^{n} \tau_{k}.
$$
If we put $\zeta (M) = \sup_{1\leq n<\infty}\{\delta_{n}\},$ by
summing geometric series, we obtain
$$
\zeta(M) = \max_{1\leq n<\infty} \Big\{
M-1+\frac{1}{2^{n}}-M\Big(\frac{M}{1+M}\Big)^{n},
1-\frac{1}{M}+\frac{1}{M}\Big(\frac{1}{M}\Big)^{n}
-\frac{1}{2^{n}}\Big\}.
$$
Clearly,  $\zeta(M)\to 0$ as $M\to 1$, and since the dyadic points
$$
\{ i/2^{n}\;\; |\;\; n=1, 2, \cdots ; 0\leq i\leq 2^{n}\}
$$
are dense in $[0,1]$, we conclude
$$
|H(x)-x| \leq \zeta (M) \quad \forall \; x\in [0,1],
$$
which proves the lemma.
\end{proof}

\vspace{10pt}
\begin{corollary}~\label{sdc}
Let $\vartheta (M) =M-1 + M\zeta (M)$. Then for any homeomorphism
$H$ of ${\mathbb R}$ and any $x, y>0\in {\mathbb R}$, if $H$
 restricted to the interval $[x-y,x+y]$ is $M$-quasisymmetric, then
$$
\max \left\{|\rho_{H}(x,y, k)-k|,  |\rho_{H}(x,-y, k)-k|
\right\}\leq \vartheta (M),\quad \forall\; 0<k\leq 1.
$$
\end{corollary}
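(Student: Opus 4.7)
My plan is to write $\rho_H(x,y,k)$ as a product of two factors, one pinned to within $\zeta(M)$ of $k$ by Lemma~\ref{sd} and one pinned to within $M-1$ of $1$ by the bare $M$-quasisymmetry inequality. This factorization is what produces the clean bound $\vartheta(M)=M-1+M\zeta(M)$. A direct application of Lemma~\ref{sd} to $H$ normalized on the full interval $[x-y,x+y]$ would yield an estimate of the form $(\text{const})\cdot\zeta(M)/(\tfrac12-\zeta(M))$ and would fail to separate the $M-1$ contribution from the $\zeta(M)$ contribution, so the idea is to choose the normalization interval asymmetrically.

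First I would restrict $H$ to the right half-interval $[x,x+y]$ and rescale, setting
$$G(t)=\frac{H(x+yt)-H(x)}{H(x+y)-H(x)},\qquad t\in[0,1].$$
Since $[x,x+y]\subset[x-y,x+y]$, the hypothesis makes $G$ an $M$-quasisymmetric self-homeomorphism of $[0,1]$ with $G(0)=0$ and $G(1)=1$, so Lemma~\ref{sd} at the point $t=k$ yields
$$\left|\frac{H(x+ky)-H(x)}{H(x+y)-H(x)}-k\right|=|G(k)-k|\leq\zeta(M).$$
At the same time, the plain $M$-quasisymmetry of $H$ at $x$ with step $y$ gives $1/M\leq\rho_H(x,y,1)\leq M$, hence $|\rho_H(x,y,1)-1|\leq M-1$.

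To combine these, I would use the identity
$$\rho_H(x,y,k)=G(k)\cdot\rho_H(x,y,1),$$
which is immediate from the definitions. Writing $G(k)=k+\epsilon_1$ with $|\epsilon_1|\leq\zeta(M)$ and $\rho_H(x,y,1)=1+\epsilon_2$ with $|\epsilon_2|\leq M-1$, a one-line expansion gives $\rho_H(x,y,k)-k=k\epsilon_2+\epsilon_1(1+\epsilon_2)$, whose absolute value is at most $k(M-1)+M\zeta(M)\leq(M-1)+M\zeta(M)=\vartheta(M)$, using $k\leq 1$.

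The estimate for $\rho_H(x,-y,k)$ is obtained by the mirror argument: normalize $H$ on the left half-interval $[x-y,x]$ to produce an $M$-quasisymmetric $\tilde G$ on $[0,1]$, apply Lemma~\ref{sd} to control the ratio $(H(x)-H(x-ky))/(H(x)-H(x-y))$ to within $\zeta(M)$ of $k$, and then factor $\rho_H(x,-y,k)$ as this ratio divided by $\rho_H(x,y,1)$, with $|1/\rho_H(x,y,1)-1|\leq M-1$ playing the role of the $\epsilon_2$-bound. The same expansion then delivers $\vartheta(M)$. I do not anticipate any essential obstacle beyond spotting the factorization and the asymmetric choice of normalization interval; once those are in hand the remainder is routine arithmetic.
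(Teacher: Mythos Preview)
Your proof is correct and follows essentially the same route as the paper: the paper also normalizes $H$ on the half-interval $[x,x+y]$ to obtain the auxiliary map $\hat H(k)=(H(x+ky)-H(x))/(H(x+y)-H(x))$, applies Lemma~\ref{sd} to it, and then multiplies by $\rho_H(x,y)\in[1/M,M]$ to reach the bound $\vartheta(M)$. Your $\epsilon_1,\epsilon_2$ bookkeeping is just a slightly more explicit way of carrying out the same final estimate.
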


\begin{proof}
Consider $\hat{H}(k) = (H(x+ky) -H(x))/(H(x+y)-H(x))$. Then
$\hat{H}(1)=1$ and  $\hat{H}(0)=0.$ Also, $\hat{H}$ is
quasisymmetric because
$$\frac{\hat{H}(k+j)-\hat{H}(k)}{\hat{H}(k)-\hat{H}(k-j)}=
\frac{{H}(x+ky+jh)-{H}(x+ky)}{{H}(x+ky)-{H}(x+ky-jy)}
$$
for any $0\leq k\leq 1$ and $j>0$ such that $[k-j,k+j]\subset
[0,1]$ and this is bounded above by $M$ and below by $1/M$ because
$H$ is $M$-quasisymmetric. So, from Lemma~\ref{sd},
$$
k-\zeta(M)
 \leq \frac{H(x+ky) -H(x)}{H(x+y)-H(x)}
 \leq k+\zeta(M).
$$
Thus
$$
(k-\zeta(M))\rho_{H} (x,y) \leq \frac{H(x+ky) -H(x)}{H(x)-H(x-y)}
\leq (k+\zeta (M))\rho_{H}(x,y).
$$
Since $ 1/M \leq \rho_{H} (x,y) \leq M $ and we are assuming that
$0<k \leq 1,$ this implies that
$$
 |\rho_{H} (x,y,k)-k| \leq  \vartheta (M)=M-1+M\zeta (M).
$$
Similarly, we have that
$$
 |\rho_{H} (x,-y,k)-k| \leq  \vartheta (M)=M-1+M\zeta (M).
$$
\end{proof}

\begin{definition}
A bounded positive function $\epsilon(y)$ defined for
positive values of $t$ is called vanishing if $\epsilon(y)
\rightarrow 0^{+}$ as $y \rightarrow 0^+.$
\end{definition}

\begin{definition}
A quasisymmetric circle homeomorphism $h$ is called symmetric (or
asymptotically affine) if it is quasisymmetric and if there exists
a vanishing function $\varepsilon(y)$ such that
\begin{equation}\label{a}
\frac{1}{1+\varepsilon(y)} \leq \rho_{H}(x,y) \leq
1+\varepsilon(y),
\end{equation}
for all real numbers $x$ and all $y>0.$
\end{definition}

We say asymptotically affine in this definition since the  ratio
in the middle expression in (\ref{a}) is identically equal to one
 if and only if $H$ is affine, that is, if $H(x)= ax +b,$ for some
 $a \neq 0.$

\vspace{10pt}
\begin{lemma}~\label{equivalent}
A quasisymmetric circle homeomorphism is symmetric if, and only
if, there is a vanishing function $\epsilon(y)$ such that
\begin{equation}~\label{sa}
\max \{ |\rho_H(x,y,k)-k|, |\rho_H(x,-y,k)-k|\} \leq \epsilon(y)
\end{equation}
for all real numbers $x$ and $y>0$ and for all $k$ with $0<k\leq
1$.
\end{lemma}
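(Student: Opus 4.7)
The plan is to prove the two implications separately. The reverse direction is essentially a specialization (take $k=1$), while the forward direction is a direct application of Corollary~\ref{sdc} once one cleans up the vanishing function $\varepsilon$.

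For $(\Leftarrow)$: I would set $k=1$ in the hypothesis, observing that $\rho_H(x,y,1)=\rho_H(x,y)$, to obtain $|\rho_H(x,y)-1|\leq \epsilon(y)$ for all $x$ and $y>0$. For values of $y$ with $\epsilon(y)$ sufficiently small (say $\epsilon(y)\leq 1/2$), elementary algebra shows that $1-\epsilon(y)\geq 1/(1+2\epsilon(y))$, so I can take $\varepsilon(y)=2\epsilon(y)$ and verify condition~(\ref{a}). For the remaining (larger) values of $y$, I would use that $h$ is already assumed quasisymmetric to fix a uniform bound $M$ on $\rho_H$ and enlarge $\varepsilon(y)$ by a constant on that complementary range; the resulting $\varepsilon$ still vanishes at $0^+$ because $\epsilon(y)>1/2$ forces $y$ to be bounded away from $0$.

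For $(\Rightarrow)$: The key idea is to promote the pointwise symmetry estimate~(\ref{a}) to a genuine quasisymmetry constant on the interval $[x-y,x+y]$, and then invoke Corollary~\ref{sdc}. The obstacle is that the given vanishing function $\varepsilon$ need not be monotone, so I cannot immediately conclude that $H\big|_{[x-y,x+y]}$ is $(1+\varepsilon(y))$-quasisymmetric: a subinterval $[x'-y',x'+y']\subset [x-y,x+y]$ has $y'\leq y$, but $\varepsilon(y')$ could a priori exceed $\varepsilon(y)$. I would handle this by replacing $\varepsilon$ with its nondecreasing envelope
$$
\tilde\varepsilon(y)=\sup_{0<t\leq y}\varepsilon(t),
$$
which is still bounded (since $\varepsilon$ is) and still vanishes as $y\to 0^+$ (since $\limsup_{t\to 0^+}\varepsilon(t)=0$). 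With $\tilde\varepsilon$ in hand, the restriction of $H$ to $[x-y,x+y]$ is $(1+\tilde\varepsilon(y))$-quasisymmetric.

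Having established this, Corollary~\ref{sdc} applied with $M=1+\tilde\varepsilon(y)$ yields
$$
\max\bigl\{|\rho_H(x,y,k)-k|,\,|\rho_H(x,-y,k)-k|\bigr\}\leq \vartheta\bigl(1+\tilde\varepsilon(y)\bigr)
$$
for all $0<k\leq 1$. Setting $\epsilon(y)=\vartheta(1+\tilde\varepsilon(y))=\tilde\varepsilon(y)+(1+\tilde\varepsilon(y))\,\zeta(1+\tilde\varepsilon(y))$, I verify it is a vanishing function: as $y\to 0^+$ we have $\tilde\varepsilon(y)\to 0$, so $1+\tilde\varepsilon(y)\to 1$, and the property $\zeta(M)\to 0$ as $M\to 1$ from Lemma~\ref{sd} forces $\epsilon(y)\to 0^+$. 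The main technical subtlety, as noted, is the monotone-envelope step; the rest is a direct substitution into Corollary~\ref{sdc}.
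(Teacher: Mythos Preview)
Your proposal is correct and follows essentially the same route as the paper: for $(\Leftarrow)$ you specialize to $k=1$ and convert $|\rho_H-1|\leq\epsilon$ into the form~(\ref{a}), and for $(\Rightarrow)$ you observe that symmetry makes $H\big|_{[x-y,x+y]}$ $M$-quasisymmetric with $M\to 1$ as $y\to 0$ and then invoke Corollary~\ref{sdc}. Your monotone-envelope step and your handling of large $y$ via the ambient quasisymmetry bound are details the paper leaves implicit, but the architecture is the same.
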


\begin{proof} Since $H$ is symmetric, by picking $y>0$
sufficiently small, we obtain a number $M$ arbitrarily close to
$1,$ such that
$$\frac{1}{M} \leq \rho_H(s,t) \leq M$$
for all numbers $s$ and $t$ for which $s-t, s$ and $s+t$ lie in
the interval $[x-y,x+y].$ By Corollary~\ref{sdc}, this implies
that there is a vanishing function $\varepsilon'(y)$ such that
$$
\max \left\{ |\rho_{H}(x,y,k)-k| ,  |\rho_{H}(x,-y,k)-k| \right\}
\leq \varepsilon'(y),$$ for all real numbers $x,$ all $y>0$ and
all $k$ with $0 \leq k \leq 1.$

Conversely, (\ref{sa}) with $k=1$ implies
$$
|\rho_H(x,y)-1| \leq \varepsilon(y),
$$
and this implies the existence of a vanishing function
$\epsilon'(y)$ for which $$\frac{1}{1+\varepsilon'(y)} \leq
\rho(x,y) \leq 1 + \varepsilon'(y) .$$
\end{proof}

\begin{definition}
A circle endomorphism $f$ of degree $d$ is called uniformly
symmetric or uniformly asymptotically affine (UAA) if all of the
inverse branches of $f^n$, $n=1, 2,\ldots,$ are symmetric
uniformly. More precisely, $f^n$ is UAA if there is a vanishing
function $\epsilon (y)$ such that, for all positive integers $n$
and all real numbers $x$ and $y>0$,
\begin{equation}\label{aa}
\frac{1}{1+\varepsilon (y)} \leq \rho_{F^{-n}} (x,y)=\frac{F^{-n}
(x+y) -F^{-n}(x)}{F^{-n}(x)-F^{-n}(x-y)} \leq 1+\varepsilon (y).
\end{equation}
\end{definition}
We say uniformly since the ratio in the middle expression in
(\ref{aa})  approaches $1$ when $y$ approaches $0$  independently
of the number $n$ of compositions of $F.$

\section{Beurling-Ahlfors Extensions}
By definition, a homeomorphism $G$ from a plane domain $\Omega$
onto another plane domain $G(\Omega)$ is quasiconformal if it is
orientation preserving  and if it has locally integrable
distributional first partial derivatives $G_{\overline{z}}$ and
$G_z$ satisfying the inequality
\begin{equation}\label{qc}
      |G_{\overline{z}}(z)| \leq k |G_z(z)|
\end{equation}for some number $k$ with $0\leq k <1$
and for almost all $z.$  The complex valued Beltrami coefficient
$\mu=\mu_G$ for $G$ is defined by the equation
\begin{equation}\label{be}
G_{\overline{z}}(z) = \mu(z) G_z(z)
\end{equation}
where $||\mu(z)||_{\infty}<1.$  It is standard to call the
quantity
\begin{equation}~\label{dil}
K_z(G)=\frac{|G_z(z)|+|G_{\overline{z}} (z)|}
{|G_z(z)|-|G_{\overline{z}}(z)|}=\ \frac{1+|\mu(z)|}{1-|\mu(z)|}
\end{equation}
the dilatation of $G$ at the point the point $z,$ and to call
$$K(G)={\rm essup}_{z \in \Omega} K_z(G)=
\frac{1+||\mu||_{\infty}}{1-||\mu||_{\infty}}$$ the dilatation of
$G$ on the domain $\Omega.$  Thus, the homeomorphism $G$ is
quasiconformal on $\Omega$ if $K(G)<\infty.$  We will also use the
nonstandard notation $K(G,z)$ for the same fraction that appears
in~(\ref{dil}) without the absolute value signs, that is,
$$
K(G,z)=\frac{G_z(z)+G_{\overline{z}}(z)}{G_z(z)-G_{\overline{z}}(z)}.
$$
Note that $K(G,z)$ is complex valued and $|K(G,z)| \leq K(G).$ In
all of these notations, $\mu_G, K_z(G), K(G)$ and $K(G,z),$ we
omit reference to the mapping $G$ if this is clear from the
context.

Consider all possible extensions  of quasisymmetric self-mappings
$H$ of the real axis to quasiconformal self-mappings $\tilde{H}$
of the upper half plane ${\mathbb H},$ and define $K(H)$ by the
formula
$$K(H)= \inf \{K(\tilde{H}): \tilde{H} {\rm \ extends \ } H \}.$$

From the theory of quasiconformal mappings, if $K(H)=1,$ then $H$
is affine, that is, $H(x)=ax+b, a \neq 0.$ Similarly, $H$ is also
affine if $M=M(H)=1$ in the M-condition (\ref{Mcondition}). Thus,
we may take both $M(H)$ and $K(H)$ as measurements of the extent
to which $H$ fails to be affine.  A well-known result of Beurling
and Ahlfors \cite{BeurlingAhlfors} shows that $M(H)$ and $K(H)$
are simultaneously finite and there are estimates for $M(H)$ in
terms of $K(H)$ and vice-versa. Moreover, $M(H)$ and $K(H)$
simultaneously approach $1.$

The Beurling-Ahlfors extension procedure provides a canonical
extension $\tilde{H}$ of any quasisymmetric homeomorphism $H$ such
that the Beltrami coefficient $\mu$ of $\tilde{H}$ satisfies
$\|\mu\|_{\infty}<1.$  Furthermore, it satisfies the following
well-known theorem \cite{GardinerSullivan2}.

\vspace*{10pt}
\begin{theorem}~\label{wk} The Beurling-Ahlfors extension of a
 quasisymmetric self-mapping $H$
of the real axis has a  Beltrami coefficient $\mu$ with
$|\mu(x+iy)|\leq \eta(y)$ for some vanishing function $\eta(y)$
if, and only if, there is a vanishing function $\epsilon(y)$ such
that $$\frac{1}{1+\epsilon(y)} \leq \rho_H(x,y) \leq
1+\epsilon(y).$$
\end{theorem}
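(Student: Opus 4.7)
The plan is to prove the two implications separately. For ($\Leftarrow$), a direct computation of the Beltrami coefficient from the explicit Beurling-Ahlfors extension formula will show that $|\mu(x+iy)|$ is controlled by the skew distortions of $H$ on the interval $[x-y,x+y]$; for ($\Rightarrow$), a rescaling and normal-families argument will recover the symmetric condition on $H$ from the vanishing of $\mu$ near the real axis.

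For ($\Leftarrow$), write $\tilde H(x+iy)=u+iv$ with $u=\tfrac12(\alpha+\beta)$, $v=\tfrac12(\alpha-\beta)$, and
$$\alpha(x,y)=\frac{1}{y}\int_0^y H(x+t)\,dt,\qquad \beta(x,y)=\frac{1}{y}\int_0^y H(x-t)\,dt.$$
Differentiating these averages produces $u_x,u_y,v_x,v_y$ expressible in terms of the boundary values $H(x\pm y)-H(x)$ together with the averages $\alpha,\beta$. Assembling $\tilde H_z=\tfrac12(\tilde H_x-i\tilde H_y)$ and $\tilde H_{\bar z}=\tfrac12(\tilde H_x+i\tilde H_y)$ and normalizing by $(H(x+y)-H(x-y))/y$, the quotient $\mu=\tilde H_{\bar z}/\tilde H_z$ is presented as a rational combination of the skew distortions $\rho_H(x,y,k)$ and $\rho_H(x,-y,k)$ for $k\in(0,1]$ that appear before Corollary~\ref{sdc}. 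When $H$ is affine these distortions equal $k$ identically and the combination vanishes; hence by Lemma~\ref{equivalent}, the symmetric hypothesis, which bounds $|\rho_H(x,\pm y,k)-k|$ uniformly by a vanishing function $\varepsilon(y)$, propagates through the rational expression and yields $|\mu(x+iy)|\leq\eta(y)$ with $\eta$ vanishing.

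For ($\Rightarrow$), suppose $|\mu_{\tilde H}(x+iy)|\leq\eta(y)$ with $\eta$ vanishing, and given any $x\in\mathbb R$ and $y>0$ define the normalized rescaling
$$\tilde H_{x,y}(z)=\frac{\tilde H(x+yz)-\tilde H(x)}{\tilde H(x+y)-\tilde H(x)}.$$
This is a quasiconformal self-map of $\mathbb H$ fixing $0$, $1$ and $\infty$, with Beltrami coefficient $z\mapsto\mu_{\tilde H}(x+yz)$. If the conclusion failed, there would be a sequence $(x_j,y_j)$ with $y_j\to 0^+$ along which $\rho_H(x_j,y_j)$ stays bounded away from $1$. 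On every compact subset of $\mathbb H$ the Beltrami coefficient of $\tilde H_{x_j,y_j}$ is dominated by $\eta(y_j\,\mathrm{Im}\,z)\to 0$, so by compactness of normalized quasiconformal self-maps of $\mathbb H$ a subsequence converges locally uniformly to a conformal self-map of $\mathbb H$ that fixes $0$, $1$ and $\infty$, forcing the limit to be the identity. But the boundary evaluation $\tilde H_{x_j,y_j}(-1)=-1/\rho_H(x_j,y_j)$ would then tend to $-1$, contradicting the standing assumption. This yields the required vanishing $\varepsilon(y)$ uniform in $x$.

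The main obstacle is the algebraic bookkeeping in ($\Leftarrow$): the explicit rational expression for $\mu$ in terms of the skew distortions requires careful arrangement, and one must verify a uniform positive lower bound for $|\tilde H_z|$ relative to $(H(x+y)-H(x-y))/y$, which follows from the quasisymmetry of $H$ making $\alpha_x$ and $\beta_x$ comparable in magnitude. Once both ingredients are in place, the vanishing of $|\rho_H(x,\pm y,k)-k|$ supplied by Lemma~\ref{equivalent} translates directly into the vanishing of $|\mu(x+iy)|$.
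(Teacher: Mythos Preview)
The paper does not give a stand-alone proof of Theorem~\ref{wk}; it cites it as well known from \cite{GardinerSullivan2}. However, the proof of Theorem~\ref{Cui} in the paper specializes immediately to Theorem~\ref{wk} by taking $H_0=\mathrm{id}$ (so $\rho_0(x,\pm y,k)=k$ and $\mu_0\equiv 0$), and that is the natural point of comparison.

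Your ($\Leftarrow$) direction is essentially the paper's computation in Theorem~\ref{Cui}: express $K(z)=(1+ia)/(b-ic)$ with $a,b,c$ written in terms of $\rho$, $\rho_+=\int_0^1\rho(x,y,k)\,dk$, $\rho_-=\int_0^1\rho(x,-y,k)\,dk$, and observe that when $\rho\to 1$, $\rho_\pm\to 1/2$ one gets $\mu\to 0$. Your remark about a lower bound on $|\tilde H_z|$ corresponds to the paper's observation that $b>0$ forces $|1+ia+b-ic|>1$. (A minor point: your imaginary part $v=\tfrac12(\alpha-\beta)$ differs by a factor of $2$ from the paper's $V$; this is just a different normalization of the Beurling--Ahlfors extension and does not affect the argument.)

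Your ($\Rightarrow$) direction is genuinely different. The paper (in the converse half of Theorem~\ref{Cui}) uses an extremal-length/Gr\"otzsch comparison: the near-conformality of $\tilde H_1\circ\tilde H_0^{-1}$ near the axis controls the ratio of extremal lengths of curve families separating $[-\infty,H_j(x-y)]$ from $[H_j(x),H_j(x+ky)]$, and Ahlfors' monotonicity of $\Lambda$ in the cross-ratio recovers the bound on $|\rho_0-\rho_1|$. You instead rescale by affine maps and invoke compactness of normalized $K$-quasiconformal self-maps of $\mathbb H$ to get a conformal limit, forcing the boundary value at $-1$ to approach $-1$. Both arguments are correct; the extremal-length route gives a more explicit modulus of continuity $\epsilon(y)$ in terms of $\eta(y)$, while your normal-families route is softer and avoids the conformal-invariant machinery, at the cost of a non-constructive $\epsilon(y)$. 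One point worth making explicit in your write-up is that compactness of normalized $K$-qc self-maps of $\mathbb H$ holds in the topology of uniform convergence on compacta of the \emph{closed} half-plane (via uniform H\"older continuity up to the boundary), which is what licenses reading off the boundary value $\tilde H_{x_j,y_j}(-1)\to -1$.
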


In this paper we require a similar estimate that allows the
comparison between distortion functions of two quasisymmetric
homeomorphisms of the real axis $H_0$ and $H_1$ fixing $0,$ $1$
and $\infty.$   The following theorem which was stated
in~\cite{Cui} would be a corollary of Theorem~\ref{wk} if the
Beurling-Ahlfors extension procedure defined a homomorphism.  That
is, if the Beurling-Ahlfors extension of $H_1$ composed with the
Beurling-Ahlfors extension of $H_2$ were equal to the
Beurling-Ahlfors extension of $H_1 \circ H_2.$ But that is not the
case. However, the result is still true and we will give a
complete proof.

\vspace*{10pt}
\begin{theorem}[Cui~\cite{Cui}]~\label{Cui} Suppose the skew quasisymmetric
distortion functions $\rho_0(x,y,k)$ and $\rho_1(x,y,k)$ of $H_0$
and $H_1$ satisfy the inequality
$$
|\rho_0(x,y,k)-\rho_1(x,y,k)|,\;\; |\rho_0(x,-y,k)-\rho_1(x,-y,
k)| \leq \epsilon(y)
$$
for $x,y>0\in {\mathbb R}$ and $0<k\leq 1$, where $\epsilon(y)$ is
a vanishing function. Suppose furthermore that $\mu_1$ and $\mu_2$
are the Beltrami coefficients of the Beurling-Ahlfors extensions
$\tilde{H}_0$ and $\tilde{H}_1,$ that is,
$$\mu_0(z)=\frac{\tilde{H_0}_{\overline{z}}}{\tilde{H_0}_z}
{\ \rm \ and \ }
\mu_1(z)=\frac{\tilde{H_1}_{\overline{z}}}{\tilde{H_1}_z}.$$ Then
there is a vanishing function $\eta(y)$ depending only on
$\epsilon(y)$ such that
$$|\mu_0(x+iy)-\mu_1(x+iy)| \leq \eta(y).$$

Conversely, given two quasiconformal maps $\tilde{H}_0$ and
$\tilde{H}_1$ preserving the real axis and a vanishing function
$\eta(y)$ such that
$$
|\mu_{0}(z)-\mu_{1}(z)|\leq\eta(y),
$$
then there is a vanishing function $\epsilon(y)$ such that
$$
|\rho_0(x,y,k)-\rho_1(x,y,k)|,\;\; |\rho_0(x,-y,k)-\rho_1(x,-y,
k)| \leq \epsilon(y)
$$
for $x,y>0\in {\mathbb R}$ and $0<k\leq 1$, where $H_{0}$ and
$H_{1}$ are the restrictions of $\tilde{H}_{0}$ and
$\tilde{H}_{1}$ to the real axis.
\end{theorem}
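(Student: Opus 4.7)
\emph{Proof plan.} My approach rests on the equivariance of the Beurling--Ahlfors procedure under real-affine pre- and post-composition: for $A(w)=yw+x$ and $B(u)=\alpha u+\beta$ with $\alpha>0$ one has $\widetilde{B\circ H\circ A}=B\circ\tilde H\circ A$, and since $A,B$ extend to Euclidean similarities of $\mathbb{H}$ this preserves the Beltrami coefficient pointwise under the change of coordinates. Fixing $(x,y)$, I would set $B_i(u)=(u-H_i(x))/(H_i(x)-H_i(x-y))$ and $\hat H_i:=B_i\circ H_i\circ A$, so that $\hat H_i(0)=0$, $\hat H_i(-1)=-1$, and
$$
\hat H_i(k)=\rho_{H_i}(x,y,k),\qquad \hat H_i(-k)=-\rho_{H_i}(x,y,1)\,\rho_{H_i}(x,-y,k),\qquad 0<k\le 1.
$$
Since each $\tilde H_i$ is quasiconformal with $\|\mu_i\|_\infty<1$, the maps $H_i$ are uniformly quasisymmetric, so all $\rho_{H_i}$ are uniformly bounded, and the hypothesis yields $\sup_{[-1,1]}|\hat H_0-\hat H_1|\le C\,\epsilon(y)$ with $C$ independent of $(x,y)$.

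I would then evaluate the Beurling--Ahlfors formula at the reference point $i$. Each of the partial derivatives $u_x,u_y,v_x,v_y$ of $\widetilde{\hat H}_i$ at $i$ is an explicit linear functional of $\hat H_i|_{[-1,1]}$, built from the three boundary values $\hat H_i(\pm 1),\hat H_i(0)$ and the two integrals $\int_{-1}^{1}\hat H_i(t)\,dt$ and $\int_{0}^{1}(\hat H_i(t)-\hat H_i(-t))\,dt$. Hence
$$
\bigl|\widetilde{\hat H}_{0,\bar z}(i)-\widetilde{\hat H}_{1,\bar z}(i)\bigr|,\ \bigl|\widetilde{\hat H}_{0,z}(i)-\widetilde{\hat H}_{1,z}(i)\bigr|\le C'\,\epsilon(y).
$$
The uniform quasisymmetry of the normalized $\hat H_i$ furnishes a uniform positive lower bound on $|\widetilde{\hat H}_{i,z}(i)|$ via the standard quantitative Beurling--Ahlfors Jacobian estimate, so forming the quotient $\mu=\tilde H_{\bar z}/\tilde H_z$ gives $|\mu_0(x+iy)-\mu_1(x+iy)|=|\hat\mu_0(i)-\hat\mu_1(i)|\le\eta(y):=C''\,\epsilon(y)$, which is vanishing.

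For the converse I would work with $G:=\tilde H_1\circ\tilde H_0^{-1}$, a quasiconformal self-map of $\mathbb{H}$ whose Beltrami coefficient obeys $|\mu_G(\tilde H_0(w))|\le|\mu_1(w)-\mu_0(w)|/(1-k^2)$, where $k=\max(\|\mu_0\|_\infty,\|\mu_1\|_\infty)<1$. H\"older continuity of $\tilde H_0^{-1}$ at the real axis (Mori's theorem, together with the boundary normalization of $\tilde H_0$) converts this into a pointwise bound $|\mu_G(z)|\le\eta'(\operatorname{Im} z)$ with $\eta'$ vanishing. The standard characterization of symmetric homeomorphisms via vanishing Beltrami coefficients then tells us $G|_{\mathbb R}=H_1\circ H_0^{-1}$ is symmetric. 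Setting $s:=H_0(x)$, $t:=H_0(x)-H_0(x-y)$, $k':=\rho_0(x,y,k)$, one verifies the identity $\rho_1(x,y,k)=\rho_G(s,t,k')$; since $H_0$ is uniformly continuous on $\mathbb R$, $t\to 0$ uniformly in $x$ as $y\to 0$, so $\rho_G(s,t,k')\to k'$ uniformly, yielding $|\rho_0-\rho_1|\le\epsilon(y)$. The $(x,-y)$ argument is symmetric.

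The principal obstacle in both directions is securing uniformity in the horizontal coordinate $x$. In the forward direction it reduces to the observation that quasisymmetry is invariant under the real-affine normalization, so the bounds on the normalized $\hat H_i$ do not depend on $(x,y)$. In the converse it requires a quantitative Mori-type estimate controlling $\operatorname{Im} w$ by $\operatorname{Im}\tilde H_0(w)$ uniformly in the real part, which relies on the boundary normalization of $\tilde H_0$ (typically fixing $0,1,\infty$).
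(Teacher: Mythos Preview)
Your forward direction is essentially the paper's argument, recast through affine naturality. The paper writes out the partial derivatives of the Beurling--Ahlfors extension at $z=x+iy$ in terms of the four quantities $L,R,L',R'$, packages them as $\rho,\rho_+,\rho_-$ (where $\rho_\pm(x,y)=\int_0^1\rho(x,\pm y,k)\,dk$), and then obtains an explicit rational expression for $\mu_1-\mu_0$ whose numerator is controlled by $|\rho_0-\rho_1|,|\rho_{0\pm}-\rho_{1\pm}|$ and whose denominator is bounded below via $b>0$. Your normalization $\hat H_i=B_i\circ H_i\circ A$ and evaluation at $i$ amounts to the same computation; your ``standard quantitative Jacobian estimate'' plays the role of the paper's explicit bound $b>0$.

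For the converse the two routes diverge. Both start with $G=\tilde H_1\circ\tilde H_0^{-1}$, but the paper then compares the extremal lengths $\Lambda_i(x,y,k)$ of the curve families joining $(-\infty,H_i(x-y)]$ to $[H_i(x),H_i(x+ky)]$ via the Gr\"otzsch inequality, and invokes Ahlfors' monotone relation between $\Lambda$ and the cross-ratio parameter to transfer the estimate back to $\rho_i$. You instead conclude that $G|_{\mathbb R}$ is symmetric and use the substitution identity $\rho_1(x,y,k)=\rho_G(s,t,k')$ with $t=H_0(x)-H_0(x-y)$. Your final step asserts ``$H_0$ is uniformly continuous on $\mathbb R$, so $t\to 0$ uniformly in $x$,'' and this is not true in general: a quasisymmetric homeomorphism of $\mathbb R$ fixing $0,1,\infty$ need not be uniformly continuous. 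In the paper's intended application the lifts satisfy $H(x+1)=H(x)+1$, which does give uniform continuity, so your argument works there; but as a proof of the theorem as stated it has a gap. The extremal-length route in the paper is also only sketched and needs care on exactly this uniformity point, but it localizes through the extremal metric rather than through a global modulus-of-continuity bound on $H_0$, which is why the authors prefer it.
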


\begin{proof}
We take the following formulas as the definition of the
Beurling-Ahlfors extension:
$$
{\widetilde H} = U+iV,
$$ where
\begin{equation}\label{ba1}
U(x,y)=\frac{1}{2y} \int_{x-y}^{x+y} H(s) ds =
\frac{1}{2}\int_{-1}^1 H(x+ky)dk
\end{equation}
and
\begin{equation}\label{ba2}
V(x,y)=\frac{1}{y} \int_x^{x+y} H(s)ds - \frac{1}{y}
\int_{x-y}^{x} H(s)ds. \end{equation}
 In (\ref{ba1}) and (\ref{ba2}) we have
chosen a normalization slightly different from the one given in
\cite{AhlforsBook1}.  It has the property that the extension of
the identity is the identity and  the extension is affinely
natural, by which we mean that for affine maps $A$ and $B,$
$$ \widetilde{id_{\mathbb R}} = \ id_{\mathbb C}$$ and
$$\widetilde{A \circ H \circ B} = A \circ {\widetilde H} \circ B.$$

Note that
\begin{equation}\label{1}
\int_0^1 \rho(x,y,k) dk =\frac{1}{H(x)-H(x-y)} \left(\frac{1}{y}\int_x^{x+y}
H(s) ds -H(x) \right)
\end{equation}
and
\begin{equation}\label{2}
\int_0^1 \rho(x,-y,k) dk = \frac{1}{H(x+y)-H(x)}
\left(H(x)-\frac{1}{y}\int_{x-y}^x H(s)ds\right).
\end{equation}

Let

\begin{equation}\label{leftright}
\begin{array}{lll}
  L & = & H(x)-H(x-y)\\
  R & = & H(x+y)-H(x)\\
  L' & = &  H(x) - \frac{1}{y}\int_{x-y}^{x}H(s)ds , \\
 R' & = & \frac{1}{y}\int_{x}^{x+y} H(s) ds - H(x).\\
 \end{array}
 \end{equation}

\noindent and let $\rho_+ (x,y)= \int_0^1 \rho(x,y,k) dk$ and
$\rho_-(x,y)= \int_0^1 \rho(x,-y,k) dk$. Let
$\rho(x,y)=\rho_{H}(x,y)$. Then

\vspace{.1in}

 \begin{equation}\label{avgs}
 \begin{array}{lll}
  \rho(x,y) & = & R/L\\
   \rho_+(x,y) & = & R'/L\\
   \rho_-(x,y) & = & L'/R.
   \end{array}
 \end{equation}

\vspace{.1in}

\noindent Notice that  for symmetric homeomorphisms the quantity
$\rho$ approaches $1$ and the two quantities $\rho_+$ and $\rho_-$
approach $1/2$ as $y$ approaches zero.
 The complex dilatation of $\tilde{H}$ is given by
 $$\mu(z)=\frac{K(z)-1}{K(z)+1}$$ where
 $$K(z)=\frac{{\tilde H}_{z} + {\tilde H}_{\overline{z}}}
 {{\tilde H}_{z} - {\tilde H}_{\overline{z}}}=
 \frac{(U+iV)_z + (U+iV)_{\overline{z}}}{(U+iV)_z - (U+iV)_{\overline{z}}}$$
 $$=\frac{(U+iV)_x -i(U+iV)_y + (U+iV)_x +i(U+iV)_y}{(U+iV)_x -
 i(U+iV)_y-
 (U+iV)_x-i(U+iV)_y}$$
 $$=\frac{U_x+iV_x}{V_y-iU_y}.$$
 Thus
 $$K(z)=\frac{1+ia}{b-ic},$$
where $a=V_x/U_x, b=V_y/U_x {\rm \ and \ } c=U_y/U_x.$

 To find estimates for these three ratios we must find expressions
 for the four partial derivatives of $U$ and $V$ in (\ref{ba1}) and
 (\ref{ba2}). In the notation of (\ref{leftright})

\vspace{.1in}

$\begin{array}{lll}
U_x & = & \frac{1}{2y}(R+L),\\
V_x & = & \frac{1}{y}\left(R-L \right),\\
V_y & = & \frac{1}{y}\left(R+L\right)-\frac{1}{y}\left(R'+L'\right) , \\
U_y & = & \frac{1}{2y}\left(R-L\right) -\frac{1}{2y}\left(R'-L'\right) . \\
\end{array}$

\vspace{.05in}

\noindent Thus

\vspace{.05in}

$\begin{array}{cll}
a(1+\rho)& = & 2\frac{R-L}{R+L}\cdot \frac{R+L}{L},\\

b(1+\rho) & = & 2\frac{R+L-R'-L'}{R+L}\cdot \frac{R+L}{L}=
2 \left(R/L + 1 - R'/L  -(R/L)(L'/R)\right),\\

c(1+\rho) & = & \frac{R-L-R'+L'}{R+L} \cdot \frac{R+L}{L}= R/L -1
-R'/L + (R/L)(L'/R).

\end{array}$

\vspace{.1in}

 \noindent Finally, we obtain
\begin{equation}\label{rho}
\begin{array}{cll}
a& = & \frac{2(\rho -1)}{\rho+1}, \\

b & = & \frac{2(\rho+1-\rho_+ -\rho\rho_-)}{\rho+1},\\

c& = & \frac{\rho -1 + \rho_+ +\rho \rho_-}{\rho+1 } .

\end{array}
\end{equation}

Since $K(z)=(1+ia)/(b-ic),$ $K(z)+1=(1+ia+b-ic)/(b-ic),$ we have
$$\mu_1(z)-\mu_0(z) = \frac{K_1(z)-1}{K_1(z)+1}-\frac{K_0(z)-1}{K_0(z)+1} =
2\frac{K_1(z)-K_0(z)}{(K_1(z)+1)(K_0(z)+1)}=$$
$$2\frac{(1+ia_1)(b_0-ic_0)-(1+ia_0)(b_1-ic_1)}{(1+ia_1+b_1-ic_1)
(1+ia_0+b_0-ic_0)}=$$
\begin{equation}\label{main}
2\frac{(a_1-a_0)(i
b_1+c_1)+(b_0-b_1)(1+ia_1)+(c_1-c_0)(i-a_1)}{(1+ia_1+b_1-ic_1)
(1+ia_0+b_0-ic_0)}. \end{equation}

From the equation for $b$ in (\ref{rho}) and the inequalities
$\rho_+<\rho$ and $\rho \rho_- <1,$ we see that $b>0.$  Since this
inequality is true for $b_1$ and for $b_0,$ it follows that the
denominator in (\ref{main}) is greater than $1.$
 These equations show that
if $a_0, b_0, c_0$ converge to $a_1, b_1, c_1,$ as $y$ approaches
zero, then $\mu_0$ approaches $\mu_1.$  Clearly $\rho_0$
approaches $\rho_1$ implies $a_0$ approaches $a_1$.

From the hypothesis
$$
|\rho_0(x,y,k)-\rho_1(x,y,k)|,\;\; |\rho_0(x,-y,k)-\rho_1(x,-y,k)|
\leq \epsilon (y)
$$
for $x,y>0\in {\mathbb R}$ and $0<k \leq 1$, we have that
$$
|\rho_{1+}(x,y)-\rho_{0+}(x,y)| \leq \int_0^1
|\rho_1(x,y,k)-\rho_0(x,y,k)| dk \leq \epsilon (y)
$$
and
$$
|\rho_{1-}(x,y)-\rho_{0-}(x,y)| \leq \int_0^1
|\rho_1(x,-y,k)-\rho_0(x,-y,k)| dk \leq \epsilon (y).
$$
This implies that $b_0, c_0$ converge to $b_1, c_1$, as $y$
approaches zero. This completes the proof of the first half of the
theorem.

Since the subsequent arguments do not require the second half, we
only sketch the proof.  Notice that if $\tilde{H}_0$ and
$\tilde{H}_1$ are quasiconformal self-maps of the complex plane
preserving the real axis with Beltrami coefficients $\mu_0$ and
$\mu_1$ satisfying
$$
|\mu_0(z)-\mu_1(z)| \leq \epsilon(y)
$$
for a vanishing function $\epsilon(y),$ then the quasiconformal
map $\tilde{H}_1 \circ (\tilde{H}_0)^{-1}$ has Beltrami
coefficient $\sigma$ with
$$
|\sigma(z)|\leq \epsilon'(y)
$$
for another vanishing function $\epsilon'(y).$  Then $\tilde{H}_1
\circ (\tilde{H}_0)^{-1}$ carries the extremal length problem for
the family of curves joining $[-\infty, \tilde{H}_0(x-y)]$ to
$[\tilde{H}_0(x),\tilde{H}_0(x+ky)]$ to the extremal length
problem for the family of curves joining $[-\infty,
\tilde{H}_1(x-y)]$ to $[\tilde{H}_1(x),\tilde{H}_1(x+ky)].$ If
$\Lambda_0(x,y,k)$ and $\Lambda_1(x,y,k)$ are these two extremal
lengths, then by the Gr\"otzsch argument there is another
vanishing function $\epsilon''(y)$ such that
$$\left|\log \frac{\Lambda_0(x,y,k)}{\Lambda_1(x,y,k)} \right| \leq \epsilon''(y).$$

In \cite[pages 74-76]{AhlforsBook2} Ahlfors shows that if
$\Lambda$ is the extremal length of the curve family that joins
the interval $[-\infty,-1]$ to $[0,m],$ $\Lambda$ is an increasing
real analytic function of $m.$ In particular,
\begin{equation}\label{CI}
\left|\log \frac{m_0}{m_1}\right|<\epsilon {\rm \ if \ and \ only
\ if \ } \left|\log
\frac{\Lambda_0}{\Lambda_1}\right|<\epsilon'\end{equation} and
$\epsilon$ and $\epsilon'$ approach zero simultaneously.

Hence by~(\ref{CI}) there is another vanishing function $\eta(y)$
such that
$$
\left|\frac{H_0(x+ky)-H_0(x)}{H_0(x)-H_0(x-y)}
-\frac{H_1(x+ky)-H_1(x)}{H_1(x)-H_1(x-y)}\right| \leq \eta(y).
$$
Similarly, we have that
$$
\left|\frac{H_0(x)-H_0(x-ky)}{H_0(x+y)-H_0(x)}
-\frac{H_1(x)-H_1(x-ky)}{H_1(x+y)-H_1(x)}\right| \leq \eta(y).
$$
This completes the proof of the second half of the theorem.
\end{proof}

\section{The UAA \te\ space}
The endomorphism $p(z)=z^m$ of $\cir$ is a degree $m$ circle
endomorphism and its lift via the covering mapping $\pi$ is
$P(x)=m \ x.$  That is, $P(0)=0$ and $\pi \circ P = p \circ \pi.$
Obviously, $p^n$ is UAA with constant $M=1.$  In fact, the
restriction to the unit circle of the ratio of Blaschke products,
$$
f(z)=\frac{\prod_{j=1}^{k+m}\frac{z-\alpha_j}{1-\overline{\alpha_j}z}}
{\prod_{j=1}^{k}\frac{z-\beta_j}{1-\overline{\beta_j}z}},
$$
for sufficiently small $|\alpha_j|$ and $|\beta_j|,$ is  also a
degree $m$ UAA circle endomorphism. The following theorem has been
proved for any one-dimensional Markov maps with bounded geometry.
A UAA map of the circle is a Markov map with bounded geometry. The
reader may refer to~\cite{Jiang0,Jiang1,Jiang} for a detailed
proof. However, for the convenience of the reader, we outline a
proof.

\begin{theorem} \label{qsconjugacy}
Given any degree $m$ UAA circle endomorphism $f,$ there  exists a
unique quasisymmetric map $h$ such  that $h \circ p \circ
h^{-1}=f,$ where $p(z)=z^m.$
\end{theorem}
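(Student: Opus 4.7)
The plan is to realize $h$ as the uniform limit of piecewise-affine approximations $h_n$ adapted to the $n$-th generation Markov partitions of the two dynamical systems, and to read off quasisymmetry of the limit from uniformly bounded geometry of these partitions, which itself will come from the UAA hypothesis.

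Working on the lifts, $P(x)=mx$ induces the natural $n$-th generation partition $\mathcal{P}_n=\{[i/m^n,(i+1)/m^n] : 0\le i< m^n\}$ of the fundamental domain $[0,1]$, while $F$ induces a partition $\mathcal{Q}_n$ consisting of the closures of the components of $[0,1]\setminus F^{-n}(\mathbb{Z})$. Both families are nested in $n$, both have $m^n$ pieces at level $n$, and both respect the Markov structure: $P$ maps each atom of $\mathcal{P}_n$ onto an atom of $\mathcal{P}_{n-1}$, and similarly for $F$. Define $h$ on the dense countable set $D=\bigcup_n P^{-n}(\{0,1\})$ by sending the $i$-th partition point of $\mathcal{P}_n$ to the $i$-th partition point of $\mathcal{Q}_n$, and extend equivariantly by $h(x+1)=h(x)+1$. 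Let $h_n$ denote the piecewise-affine interpolation on the atoms of $\mathcal{P}_n$; by construction one has the level-$n$ intertwining at all partition points.

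The analytic core is to show that $\mathcal{Q}_n$ has uniformly bounded geometry: there exists $M>1$, independent of $n$, such that any two adjacent atoms $J,J'\in\mathcal{Q}_n$ satisfy $M^{-1}\le |J|/|J'|\le M$. Given this, a standard Markov-map argument shows that the $h_n$ are uniformly $M'$-quasisymmetric, that the expansion of $F$ forces $\max_{J\in\mathcal{Q}_n}|J|\to 0$ geometrically and hence $h_n\to h$ uniformly, and that the limit $h$ is $M'$-quasisymmetric. To obtain bounded geometry, observe that any triple $(J_{i-1},J_i,J_{i+1})$ of consecutive atoms of $\mathcal{Q}_n$ is the image under some inverse branch of $F^n$ of three adjacent unit intervals in $\mathbb{R}$; the UAA inequality \eqref{aa}, applied at unit scale $y=1$ to $F^{-n}$, supplies a bound $M$ on the ratio of the outer two lengths that is uniform in $n$. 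With bounded geometry in hand, $h\circ P = F\circ h$ holds on the dense set $D$ by construction and extends by continuity; uniqueness follows because, after normalizing $h$ to send the chosen fixed point of $p$ to the fixed point of $f$, any quasisymmetric conjugacy $h'$ is forced by iteration to send $P^{-n}$-preimages of the fixed point to $F^{-n}$-preimages in the combinatorially prescribed order, pinning $h'$ down on $D$.

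The main obstacle is extracting the uniform bounded-geometry constant $M$ from the UAA hypothesis. The definition \eqref{aa} gives a vanishing refinement as $y\to 0$, but what is needed here is a single quasisymmetric constant valid at the unit scale for every inverse iterate $F^{-n}$. This is exactly what is built into the UAA definition: the function $\varepsilon(y)$ is bounded on $(0,\infty)$ uniformly in $n$, so $F^{-n}$ is $M$-quasisymmetric with a common $M$ at every scale. Converting this into uniform bounded geometry of $\mathcal{Q}_n$, and then into a uniform quasisymmetric constant for $h_n$ stable under passage to the limit, is the step where some care is required; once that is done, Lemma~\ref{sd} and Corollary~\ref{sdc} provide the analytic machinery to close the argument.
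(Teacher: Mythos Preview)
Your proposal is correct and follows essentially the same route as the paper: define $H$ on the $m$-adic rationals by matching $P^{-n}$-preimages of $0$ to $F^{-n}$-preimages, obtain bounded geometry of adjacent partition intervals by applying the UAA bound \eqref{aa} to $F^{-n}$ at scale $y=1$, and then upgrade to a uniform quasisymmetry constant via the bracketing step (the paper's $M'=1+M+\cdots+M^{m}$). Your closing reference to Lemma~\ref{sd} and Corollary~\ref{sdc} is unnecessary here---the paper does not invoke them for this theorem, and your argument does not need them either.
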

\begin{proof} We begin by using the dynamics of the iterations  of $p$
and $f$ to construct a self map
$H$ of ${\mathbb R}$ satisfying

 i) $H(0)=0,$

 ii) $H \circ T = T \circ H$  and

 iii) $H \circ M = F \circ H.$

\noindent From $H(0)=0$ and $H \circ T^k(0)= T^k \circ H(0),$ we
conclude that $H(k)=k.$  Note that $F \circ T(0)=T^m \circ F (0)$
and $F(0)=0$ implies that $F(1)= m.$  Also, $F \circ T = T^m \circ
F$ implies
$$F^n \circ T(0) = T^{m^n}(0)$$ and so $F^n(1)=m^n.$
Since $F$ is an increasing homeomorphism, $F(0)=0$ and
$F^n(1)=m^n,$ we may select numbers $a_{j,n}$ between $0$ and $1$
such that  $F^n(a_{j,n})=j$ for integers $j$ and $n$ with $0
<j<m^n.$ Then, by definition, if we put $H(j/m^n)=a_{j,n},$  we
obtain
$$ H \circ P^n (j/m^n) = H(j)= j {\rm \ \  and \ \ } F^n \circ
H(j/m^n) = F^n(a_{j.n})= j.$$ This defines $H$ on a dense set of
the unit interval  with the  property that for points $x$ in the
dense set  $H \circ P (x) = F \circ H(x).$ We extend $H$ to a
dense subset of the interval $[k-1,k]$  by requiring that $H \circ
T^k = T^k \circ H.$ If we put $x=j/m^n$ and $t=1/m^n,$  then we
obtain
$$\frac{1}{M} \leq \frac{a_{j+1,n}- a_{j,n}}{a_{j,n}- a_{j-1,n}} \leq  M.$$

Now let $c$ be any number of the form $j/m^n$ and $t$ any positive
number.  Select $k$ so that $1/m^k \leq t \leq 1/m^{k-1}.$ Then
$$\frac{H(c+t)-H(c)}{H(c)-H(c-t)}
\leq \frac{H(c+1/m^{k-1})-H(c)}{H(c)-H(c-1/m^k)} \leq M',$$ where
$M'=1+M+M^2+ \cdots +M^m,$ which depends only on $m$ and $M.$ The
same type of argument yields the lower bound
$$\frac{1}{M'} \leq \frac{H(c+t)-H(c)}{H(c)-H(c-t)}.$$
Since $H$ is continuous and the set of points $j/m^n$ for variable
integers $j$ and $n$ is dense, we conclude that $H$ is
quasisymmetric.
\end{proof}

\begin{definition}  The \te\ space ${\mathcal T}(m)$ consists of all
UAA circle endomorphisms of degree $m>1$ factored by an
equivalence relation. Two endomorpisms $f_0$ and $f_1$
representing elements of ${\mathcal T}(m)$  are equivalent if, and
only if, there is a symmetric homeomorphism $h$ of $\cir$ such
that $h \circ f_0 \circ h^{-1}=f_1.$ Since the dynamics of the
mappings $T$, $P$ and $F$ uniquely determine the points $a_{j,n},$
the mapping $H$ is unique.
\end{definition}

\section{UAC Endomorphisms}
If $f$ is a UAA circle endomorphism, it is possible that $f$ has a
reflection invariant extension $\tilde{f}$ defined in a small
annulus $r<|z|<1/r$ such that
$$\tilde{f}(1/\overline{z})=1/\overline{\tilde{f}(z)},$$
and such that for every $\epsilon>0$ there exists a possibly
smaller annulus $U=\{z:r'<|z|<1/r'\}$ such that
\begin{equation}\label{uac1}
K_z(\tilde{f}^{-n})<1+\epsilon
\end{equation}
 for all $z$ in $U.$  Here
$K_z(g)$ is the dilatation of $g$ at $z$ and inequality
(\ref{uac1}) is meant to hold for almost every $z$ in $U$ and for
all positive integers $n.$  If such an extension exists, then
$\tilde{f}$ is called a uniformly asymptotically conformal (UAC)
dynamical  system.
\begin{lemma}If $\tilde{f}$ is a UAC degree $m$ map defined in a
neighborhood of $\cir,$  then the restriction $f$ of $\tilde{f}$
to $\cir$ is a degree $m$ UAA circle endomorphism.
\end{lemma}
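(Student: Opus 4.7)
The plan is to deduce the UAA condition for $f$ from a uniform bound on the Beltrami coefficients of the lifted inverse iterates, and then invoke the converse (second) half of Theorem~\ref{Cui} with the identity as the second map. The degree assertion is automatic: $f=\tilde{f}|_{\cir}$ is already a degree $m$ self-covering of $\cir$ by hypothesis, so it suffices to establish the UAA estimate on the inverse branches.

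I begin by passing to the universal cover. Writing $\pi(x)=e^{2\pi ix}$, I lift $\tilde{f}$ to a quasiconformal map $\tilde{F}$ defined on a horizontal strip $\{|y|<y_{0}\}$ around $\mathbb{R}$ (the lift of the annulus $r<|z|<1/r$). The inverse iterates $\tilde{F}^{-n}$, uniquely pinned down by the normalization $F(0)=0$, preserve $\mathbb{R}$ and restrict there to $F^{-n}$; moreover the single map $\tilde{F}^{-n}$ on its strip of definition encodes all $m^{n}$ branches of $\tilde{f}^{-n}$ simultaneously. The UAC hypothesis gives, for every $\epsilon>0$, an $r'(\epsilon)$ such that $K_{z}(\tilde{f}^{-n})<1+\epsilon$ on $\{r'<|z|<1/r'\}$ for every $n$. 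Translated to the strip $|y|<y_{0}(\epsilon):=-\log r'(\epsilon)/(2\pi)$, this reads
$$
|\mu_{n}(x+iy)|\leq \frac{\epsilon}{2+\epsilon},
$$
where $\mu_{n}$ is the Beltrami coefficient of $\tilde{F}^{-n}$. Since $y_{0}(\epsilon)\to 0$ as $\epsilon\to 0$, setting $\eta(y)$ equal to $\epsilon/(2+\epsilon)$ for the least $\epsilon$ with $y_{0}(\epsilon)\geq |y|$ produces a vanishing function $\eta$ with $|\mu_{n}(x+iy)|\leq \eta(y)$ uniformly in $n$.

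Next I apply the converse half of Theorem~\ref{Cui} with $\tilde{H}_{0}=\tilde{F}^{-n}$ and $\tilde{H}_{1}=\mathrm{id}$. Because the hypothesis $|\mu_{0}(z)-\mu_{1}(z)|\leq \eta(y)$ holds uniformly in $n$ and the conclusion depends only on $\eta$, there is a single vanishing function $\epsilon_{1}(y)$ satisfying
$$
|\rho_{F^{-n}}(x,y,k)-k|\leq \epsilon_{1}(y),\qquad |\rho_{F^{-n}}(x,-y,k)-k|\leq \epsilon_{1}(y)
$$
for every $x\in\mathbb{R}$, $y>0$, and $0<k\leq 1$, uniformly in $n$. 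Setting $k=1$ gives $|\rho_{F^{-n}}(x,y)-1|\leq \epsilon_{1}(y)$, which for $y$ small enough that $\epsilon_{1}(y)<1$ rearranges to the defining UAA inequality~(\ref{aa}) with $\varepsilon(y)=\epsilon_{1}(y)/(1-\epsilon_{1}(y))$.

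The principal technical obstacle is that Theorem~\ref{Cui} is stated for quasiconformal self-maps of the full complex plane preserving $\mathbb{R}$, while each $\tilde{F}^{-n}$ is defined only on a strip. I would resolve this by extending $\tilde{F}^{-n}$ arbitrarily to a quasiconformal self-map of $\mathbb{C}$, first using the reflection invariance of $\tilde{f}$ to reflect $\tilde{F}^{-n}$ symmetrically across $\mathbb{R}$ and then extending further through the horizontal boundaries of the strip by, say, a Beurling-Ahlfors extension of the induced boundary values. Since the Gr\"otzsch / extremal-length estimate underlying Theorem~\ref{Cui}'s second half depends only on the Beltrami coefficient in a neighborhood of the real axis, the behavior of the extension far from $\mathbb{R}$ is irrelevant, and the uniform bound $|\mu_{n}|\leq \eta(y)$ near $\mathbb{R}$ is all that is needed.
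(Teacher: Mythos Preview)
The paper states this lemma without proof, presumably regarding it as a routine consequence of the general principle---embodied in Theorem~\ref{wk} and the converse half of Theorem~\ref{Cui}---that quasiconformal extensions with Beltrami coefficient vanishing toward the real axis restrict to symmetric boundary homeomorphisms. Your argument makes this explicit in exactly the natural way, by taking $\tilde{H}_{1}=\mathrm{id}$ in Theorem~\ref{Cui}; this is almost certainly what the authors had in mind.

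Two remarks on the details you flag. First, the uniformity in $n$ that you need does follow: in the proof of the converse half of Theorem~\ref{Cui}, the vanishing function $\epsilon''(y)$ produced by the Gr\"otzsch argument depends only on the bound $\eta$ for $|\mu_{0}-\mu_{1}|$ (and, through the composition formula, on a global dilatation bound for $\tilde{H}_{0}$, which here is uniform in $n$ by the UAC hypothesis). Second, your resolution of the strip-versus-plane issue is adequate, but one can also avoid it altogether: the extremal-length comparison for the curve family joining $[-\infty,F^{-n}(x-y)]$ to $[F^{-n}(x),F^{-n}(x+ky)]$ is, after an affine normalization, sensitive only to the Beltrami coefficient on a region of height comparable to the separation of the marked points, so the behavior of any extension away from the strip is irrelevant.
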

\begin{lemma} For any degree $m$ UAC map $\tilde{f}$ acting on a
neighborhood of $\cir$ with $\tilde{f}(1)=1,$ there is a unique
lift $\tilde{F}$ to an infinite strip containing ${\mathbb R}$ and
bounded by lines parallel to ${\mathbb R}$ such that
\begin{enumerate}
\item{$\pi \circ \tilde{F} = \tilde{f} \circ \pi,$}
\item{$\tilde{F}(0)=0$} \item{$\tilde{F}\circ T = T^m \circ
\tilde{F},$ and} \item{$\tilde{F}$ preserves the real axis and
$\tilde{F}(\overline{z})=\overline{\tilde{F}(z)}.$}
\end{enumerate}
\end{lemma}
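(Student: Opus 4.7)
The plan is to construct $\tilde{F}$ by a standard covering-space lifting argument applied to $\tilde{f}\circ\pi$, and then verify properties (1)--(4) one by one, using uniqueness of the lift for (3) and (4) and the reflection invariance of $\tilde{f}$ for (4).

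Suppose $\tilde{f}$ is defined on the annulus $U=\{r'<|w|<1/r'\}$. Its preimage under the covering $\pi(z)=e^{2\pi iz}$ is the horizontal strip $S=\{z\in\mathbb{C}:|\operatorname{Im} z|<\frac{1}{2\pi}\log(1/r')\}$, which is simply connected and symmetric about $\mathbb{R}$. Since $\pi\colon\mathbb{C}\to\mathbb{C}^{\ast}$ is a covering, the lifting criterion produces a continuous map $\tilde{F}\colon S\to\mathbb{C}$ with $\pi\circ\tilde{F}=\tilde{f}\circ\pi$. From $\tilde{f}(1)=1$ we get $\pi(\tilde{F}(0))=1$, so $\tilde{F}(0)\in\mathbb{Z}$; adjusting by an integer translation (i.e.\ choice of sheet) we enforce $\tilde{F}(0)=0$, and connectedness of $S$ shows this lift is unique. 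This proves (1) and (2).

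For (3), both $\tilde{F}\circ T$ and $T^{m}\circ\tilde{F}$ are lifts of $\tilde{f}\circ\pi$ on $S$, so they differ by an integer constant. Evaluated at $0$, $T^{m}\circ\tilde{F}(0)=m$ while $\tilde{F}\circ T(0)=\tilde{F}(1)$. A path from $0$ to $1$ in $\mathbb{R}$ projects under $\pi$ to a loop based at $1$ which $\tilde{f}$ sends to a loop of winding number $m$ about the origin, so lifting forces $\tilde{F}(1)=m$, and the two maps agree. For (4), observe that $\pi(\bar z)=1/\overline{\pi(z)}$, so the reflection $z\mapsto\bar z$ on $\mathbb{C}$ covers the reflection $w\mapsto 1/\bar w$ on $\mathbb{C}^{\ast}$. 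Define $G(z)=\overline{\tilde{F}(\bar z)}$ on $S$ and compute
$$\pi(G(z))=\frac{1}{\overline{\pi(\tilde{F}(\bar z))}}=\frac{1}{\overline{\tilde{f}(\pi(\bar z))}}=\frac{1}{\overline{\tilde{f}(1/\overline{\pi(z)})}}=\tilde{f}(\pi(z)),$$
where the last step uses the hypothesis $\tilde{f}(1/\bar w)=1/\overline{\tilde{f}(w)}$ with $w=1/\overline{\pi(z)}$. Thus $G$ is another lift of $\tilde{f}\circ\pi$, and $G(0)=0=\tilde{F}(0)$, so by uniqueness $G\equiv\tilde{F}$. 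This gives $\tilde{F}(\bar z)=\overline{\tilde{F}(z)}$ and in particular $\tilde{F}(\mathbb{R})\subset\mathbb{R}$.

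The main point requiring care is the reflection computation in (4); it is just a matter of tracking how the two conjugations (on $z$ and on $\tilde{F}$) interact with the two different reflections on the source and target of $\pi$. Everything else is a direct application of covering-space lifting and the degree hypothesis on $\tilde{f}$.
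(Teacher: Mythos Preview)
The paper states this lemma without proof, treating it as a routine fact about covering spaces; there is no argument in the text to compare against. Your proof is correct and is precisely the standard argument one would expect: existence and uniqueness of the lift come from simple connectivity of the strip $S$, property~(3) follows from the degree-$m$ hypothesis via the winding number of $\tilde{f}$ applied to the generator of $\pi_1(\cir)$, and property~(4) follows from the reflection invariance $\tilde{f}(1/\overline{w})=1/\overline{\tilde{f}(w)}$ together with the identity $\pi(\overline{z})=1/\overline{\pi(z)}$ and uniqueness of lifts. Your computation in~(4) is accurate. Note also that the UAC hypothesis plays no role here beyond supplying the reflection invariance; the lemma is purely topological.
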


\begin{lemma}\label{lift}
In the notation of the previous lemma, if $\tilde{f}$ is UAC then
$\tilde{F}$ is UAC in the sense that for every $\epsilon>0,$ there
is a $\delta>0$ such that if the absolute value of $y = Im \ z$ is
less than $\delta,$ then
\begin{equation}\label{unif1}
K_z(\tilde{F}^{-n})<1+\epsilon.
\end{equation}

Conversely, if $\tilde{F}$ is UAC in the sense that (\ref{unif1})
is satisfied and $\tilde{F}(T(z))=T^m \circ \tilde{F}(z),$ then
the induced map $\tilde{f}$ satisfying $\pi \circ
\tilde{F}=\tilde{f}\circ \pi$
 is UAC.
\end{lemma}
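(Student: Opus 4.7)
The plan is to exploit the fact that the covering map $\pi(z)=e^{2\pi i z}$ is holomorphic with nonvanishing derivative, so pre- or post-composition with $\pi$ (or one of its local holomorphic inverses) leaves the pointwise dilatation unchanged. The lemma then reduces to a change-of-variables between the annulus coordinate $w=\pi(z)$ and the strip coordinate $z=x+iy$, together with the observation that a thin strip around $\mathbb{R}$ maps under $\pi$ onto a thin annulus around $\cir$.

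First I would differentiate the functional equation $\pi\circ\tilde{F}=\tilde{f}\circ\pi$ using the Wirtinger chain rule. Because $\pi_{\bar z}\equiv 0$, a short computation gives
$$\mu_{\tilde{F}}(z) \;=\; \mu_{\tilde{f}}(\pi(z))\cdot\frac{\overline{\pi'(z)}}{\pi'(z)}.$$
The factor $\overline{\pi'(z)}/\pi'(z)$ is unimodular, so $|\mu_{\tilde{F}}(z)|=|\mu_{\tilde{f}}(\pi(z))|$ and therefore $K_z(\tilde{F})=K_{\pi(z)}(\tilde{f})$. Iterating the functional equation gives $\pi\circ\tilde{F}^{-n}=\tilde{f}^{-n}\circ\pi$ on suitable domains, whence the same identity yields
$$K_z(\tilde{F}^{-n})\;=\;K_{\pi(z)}(\tilde{f}^{-n})$$
for all $n$ and almost every $z$.

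Now the geometry of $\pi$ handles the quantitative part: since $\pi(x+iy)=e^{-2\pi y}e^{2\pi i x}$, we have $|\pi(z)|=e^{-2\pi y}$, so $\pi$ carries the strip $\{|y|<\delta\}$ onto the annulus $\{e^{-2\pi\delta}<|w|<e^{2\pi\delta}\}$ and the converse, $\pi^{-1}$ acting via its local holomorphic branches, carries annuli to strips. Given $\epsilon>0$, pick the annulus $U=\{r'<|w|<1/r'\}$ supplied by the UAC hypothesis on $\tilde{f}$ and choose $\delta>0$ small enough that $\{|y|<\delta\}$ maps into $U$; then the displayed identity gives $K_z(\tilde{F}^{-n})<1+\epsilon$ whenever $|\operatorname{Im}z|<\delta$, which is~(\ref{unif1}). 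For the converse direction, given $\epsilon>0$ one uses the UAC hypothesis on $\tilde{F}$ to pick $\delta$, and then selects $r'$ so that $\{r'<|w|<1/r'\}\subset\pi(\{|y|<\delta\})$; the same identity, read in the opposite direction on each local branch, gives $K_w(\tilde{f}^{-n})<1+\epsilon$ on this annulus. The equivariance $\tilde{F}\circ T=T^m\circ\tilde{F}$ ensures the bound is independent of the branch chosen, so $\tilde{f}$ is well-defined and UAC.

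There is no real obstacle here: the entire content is the invariance of $|\mu|$ under holomorphic pre- and post-composition, plus the explicit geometry of $\pi$. The only care needed is to verify, at the level of derivatives, that the functional equations for $\tilde{F}^{\pm n}$ and $\tilde{f}^{\pm n}$ descend and lift correctly through $\pi$ on the strip and annulus in question, and to note that the unimodular twist $\overline{\pi'}/\pi'$ drops out of the modulus (and of the ratio $(1+|\mu|)/(1-|\mu|)$) so plays no role in the UAC estimate.
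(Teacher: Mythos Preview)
Your argument is correct and is the natural one. The paper itself states this lemma without proof, so there is nothing to compare against; your approach---using the holomorphy of $\pi$ to show $|\mu_{\tilde{F}^{-n}}(z)|=|\mu_{\tilde{f}^{-n}}(\pi(z))|$ and then translating between strips $\{|\operatorname{Im} z|<\delta\}$ and annuli $\{e^{-2\pi\delta}<|w|<e^{2\pi\delta}\}$ via $|\pi(x+iy)|=e^{-2\pi y}$---is exactly the standard way to pass UAC through a holomorphic covering. The only point worth making slightly more explicit is that $\tilde{F}$ is a homeomorphism of the strip (so $\tilde{F}^{-n}$ is globally defined there), while $\tilde{f}^{-n}$ must be read as a local inverse branch; the relation $\pi\circ\tilde{F}^{-n}=\tilde{f}^{-n}\circ\pi$ then picks out one such branch, and the UAC hypothesis on $\tilde{f}$ applies to all branches, so no ambiguity arises.
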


\begin{theorem}\label{uaatouac} If $f$ is a UAA system acting on the unit circle,
then there exists a UAC system $\tilde{f}$ acting in a
neighborhood of the circle such that the restriction of
$\tilde{f}$ to the circle is equal to $f.$
\end{theorem}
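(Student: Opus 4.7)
The construction I would use is to conjugate the linear dynamics into the plane. By Theorem~\ref{qsconjugacy}, there is a quasisymmetric conjugacy $h$ with $f = h\circ p\circ h^{-1}$, where $p(z)=z^m$; lifting to $\mathbb{R}$ gives $H\circ P = F\circ H$ with $P(x)=mx$. Let $\tilde H$ be the Beurling--Ahlfors extension of $H$ (extended by Schwarz reflection to the lower half-plane) and set
$$\tilde F := \tilde H\circ P\circ \tilde H^{-1}$$
on a strip containing $\mathbb R$. Then $\tilde F$ is a reflection--invariant quasiconformal extension of $F$, and since $P$ is linear, the iterate $\tilde F^{-n} = \tilde H\circ P^{-n}\circ \tilde H^{-1}$ extends $F^{-n}$ for every $n$. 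Passing through $\pi$ gives an extension $\tilde f$ of $f$ to an annular neighborhood of $\cir$, and by Lemma~\ref{lift} establishing UAC reduces to showing that $|\mu_{\tilde F^{-n}}(z)|\to 0$ as $\operatorname{Im} z\to 0$ uniformly in $n$.

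Since $P^{-n}$ is holomorphic, the composition formula for Beltrami coefficients yields, with $w:=\tilde H^{-1}(z)$, the bound
$$|\mu_{\tilde F^{-n}}(z)| \;\le\; \frac{|\mu_{\tilde H}(w/m^n)-\mu_{\tilde H}(w)|}{\,1-\|\mu_{\tilde H}\|_\infty^{2}\,}.$$
Because $\tilde H$ is quasiconformal and preserves $\mathbb R$, $\operatorname{Im} w\to 0$ whenever $\operatorname{Im} z\to 0$, so the goal is reduced to producing a single vanishing function $\eta$, independent of $n$, such that
$$|\mu_{\tilde H}(w/m^n)-\mu_{\tilde H}(w)|\;\le\;\eta(\operatorname{Im} w).$$

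For this I would invoke Cui's Theorem~\ref{Cui}. Set $H_n(x):=H(x/m^n)$; by the affine naturality of the Beurling--Ahlfors extension, the BA extension of $H_n$ is $\tilde H\circ P^{-n}$, whose Beltrami coefficient at $z$ equals $\mu_{\tilde H}(z/m^n)$. The conjugacy identity $H_n(x)=F^{-n}(H(x))$, together with a direct manipulation of the defining ratios, yields
$$\rho_{H_n}(x,y,k) \;=\; \rho_{F^{-n}}\bigl(H(x),\; H(x)-H(x-y),\; \rho_H(x,y,k)\bigr),$$
and the analogous identity with $-y$ in place of $y$. The UAA hypothesis says that $\{F^{-n}\}$ is uniformly symmetric, so by Lemma~\ref{equivalent} there is a single vanishing function $\epsilon_0$ with $|\rho_{F^{-n}}(u,v,\kappa)-\kappa|\le\epsilon_0(v)$ for all $n$, $u$, and $\kappa\in(0,1]$. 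The uniform continuity of $H$, which commutes with integer translations, makes $H(x)-H(x-y)\to 0$ uniformly in $x$ as $y\to 0$, giving a vanishing bound---independent of $n$---on $|\rho_{H_n}(x,\pm y,k)-\rho_H(x,\pm y,k)|$. Applying Theorem~\ref{Cui} with $H_0=H$ and $H_1=H_n$ then produces the required $\eta$.

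The main obstacle I anticipate is extracting the uniformity in $n$ from Cui's theorem. This hinges on tracking carefully that the vanishing function output by Theorem~\ref{Cui} depends only on the vanishing comparison function for the input skew distortions, which in our application is inherited from the single UAA function $\varepsilon$ and is therefore independent of $n$.
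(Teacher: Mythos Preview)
Your proposal is correct and matches the paper's approach: define $\tilde F=\tilde H\circ P\circ\tilde H^{-1}$ via the Beurling--Ahlfors extension of the conjugacy from Theorem~\ref{qsconjugacy}, then apply Theorem~\ref{Cui} to $H$ and $H_n=F^{-n}\circ H=H\circ P^{-n}$ to control $|\mu_{\tilde H}(z/m^n)-\mu_{\tilde H}(z)|$ uniformly in $n$. Your uniformity concern is resolved exactly as you anticipate, since Theorem~\ref{Cui} states that the output $\eta$ depends only on the input vanishing function $\epsilon$; the paper is in fact terser than your outline at the distortion-comparison step, invoking Corollary~\ref{sdc} directly where you write out the explicit identity relating $\rho_{H_n}$ to $\rho_{F^{-n}}$ and $\rho_H$.
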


\begin{proof} Let $F$ be the lift to the real axis of
$f$ such that $F(0)=0,$ $F \circ T = T^m \circ F$ and such that
$\pi \circ F = f \circ \pi.$  By Theorem \ref{qsconjugacy} there
is a quasisymmetric homeomorphism $H$ of ${\mathbb R}$ fixing $0$
and $1$ such that

   i) $H \circ P \circ H^{-1} = F$ where $P(x)=m \  x,$  and

   ii) $H \circ T \circ H^{-1} = T$  where $T(x)=x+1.$

\noindent By Lemma \ref{lift} it will suffice to find an extension
$\tilde{F}$ of $F$ such that

   i) $ \tilde{F} \circ T (z) = T^m \circ \tilde{F}(z)$

  and

   ii) the Beltrami coefficients  $\mu_{\tilde{F}^{-n}}$ of
   $\tilde{F}^{-n}$
   satisfy $$|\mu_{\tilde{F}^{-n}}(x+iy)|\leq \epsilon(y)$$
    where $\epsilon(y)$ is independent of $n$ and $x.$

    We define $\tilde{F}$ to be $\tilde{H} \circ M \circ
    \tilde{H}^{-1}.$  Since $\tilde{H}$ extends $H,$ clearly
    $\tilde{F}$ extends $F.$

Suppose $\rho_1(x,y,k)$ and $\rho_2(x,y,k)$ are the skew
quasisymmetric distortions of $F^{-n} \circ H$ and $H.$  By
Corollary~\ref{sdc}, there is a vanishing function $\epsilon(y)$
such that
$$
|\rho_1(x,y,k)-\rho_2(x,y,k)|,\;\; |\rho_1(x,-y, k)-\rho_2(x,-y,
k)| \leq \epsilon(y)
$$
for all real numbers $x,$ all $y>0,$ all $k$ with $0<k\leq 1$ and
all $n\geq 1$. Applying Theorem~\ref{Cui}, there is another
vanishing function $\eta(y)$ such that the Beltrami coefficients
$\mu_{\widetilde{F^{-n} \circ H}}$ and $\mu_{\tilde{H}}$ satisfy
$$
|\mu_{\widetilde{F^{-n} \circ H}}(z)- \mu_{\tilde{H}}(z)| \leq
\eta(y), \quad \forall\; n>0.
$$

Since  $$\widetilde{F^{-n} \circ H}
    = \widetilde{H \circ P^{-n}} = \tilde{H} \circ P^{-n},$$
    we conclude that $$|\mu_{\tilde{H}}(m^{-n}z)-\mu_{\tilde{H}}(z)| \leq
    \eta(y).$$
    Also, since the Beurling-Ahlfors extension is affinely natural,
    $\mu_{\tilde{H}}(T(z))=\mu_{\tilde{H}}(z)$
    and $\tilde{H} \circ T \circ \tilde{H}^{-1}(z) = T(z).$ We
    conclude that $\tilde{F}=\tilde{H} \circ M \circ
    \tilde{H}^{-1}$ and $T$ form a uniformly asymptotically
    conformal circle endomorphism of degree $m$.
\end{proof}

\vspace*{20pt}

\end{document}